  \theoremstyle{plain}
  \newtheorem*{thm*}{Theorem}
\gdef\SetFigFontNFSS#1#2#3#4#5{} %Silence pointless warnings due to xfig
\theoremstyle{remark}
\newtheorem*{qst*}{Question}
\begin{document}

\title[Mirror model]
{Lower bound for the escape probability in the Lorentz Mirror Model on $\mathbb Z^2$}

\author{Gady Kozma}
\address{Gady Kozma,
Department of Mathematics, Ziskind Building, 
Weizmann Institute of Science, Rehovot 76100, Israel}
\email{gady.kozma@weizmann.ac.il}
\author{Vladas Sidoravicius}
\address{Vladas Sidoravicius, IMPA, Estrada Dona Castorina 110, 
22460-320, Rio de Janeiro, Brasil.}
\email{vladas@impa.br}

\begin{abstract}
We show that in the Lorentz mirror model, at any density of mirrors, 
\[
\mathbb{P}(0\leftrightarrow\partial Q(n))\ge\frac{1}{2n+1}.
\]\end{abstract}
\maketitle

\bigskip
\bigskip

Let $0 \le p \le 1$. Designate each vertex $x \in \mathbb Z^2$ a double-sided 
mirror with probability $p$. For every vertex the designations are independent 
and identically distributed. Vertices which are designated a  mirror, with probability 
$1/2$ obtain a north-west mirror, otherwise they obtain north-east mirror. A ray 
of light traveling along the edges of $\mathbb Z^2$ is reflected when
it hits a mirror (see image on the right)
and keeps its direction unchanged at vertices which are not designated a mirror. The question 
is if for all values of $0 < p \leq 1$ the orbits of the ray of light are periodic, or, otherwise, for 
some values of $p < 1$ there is a positive probability that the light travels to infinity. 
For $p=1$, a simple argument, see for instance \cite[\S 13.3]{G}, shows that this question is 
equivalent to the question of the existence of an infinite open path in the 
bond percolation model on $\mathbb{Z}^2$ at the parameter value $1/2$, 
which is, due to the seminal result of \cite{H}, known to have negative answer. If $p=1$ and 
mirrors are oriented with probabilities $p_{_{NW}} \neq p_{_{NE}}$ we
have the same conclusion, see  \cite[p.\ 54]{K}.
No similar result is known for $p < 1$. We are ready to state
our theorem. By $[0 \leftrightarrow A]$ we denote the event that ray of light
starting at the origin reaches set $A \subset \mathbb{Z}^2$, and  let
$Q(n) = [-n, n]^2$. 
\parshape 13 0pt 10cm 0pt 10cm 0pt 10cm 0pt 10cm 0pt 10cm 0pt 10cm 0pt 10cm 0pt 10cm 0pt 10cm 0pt 10cm 0pt 10cm 0pt 10cm 0pt \hsize
\vadjust{\kern -7.6cm \vtop to 7.6cm{\hbox to \hsize{\hss\includegraphics{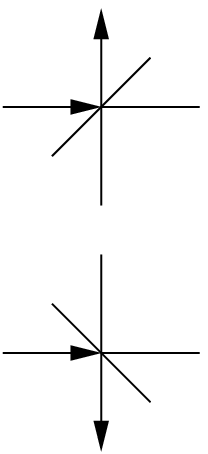}}}}

\begin{thm*}
In the mirror model at any density $0 < p \leq 1$ of mirrors, 
\[
\mathbb{P}_p(0\leftrightarrow Q(n)^c)\ge\frac{1}{2n+1}.
\]
\end{thm*}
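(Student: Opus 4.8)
The backbone is the rigidity of the dynamics: the configuration assigns to every vertex a pairing of its four incident edges (into $\{$N,S$\}$, $\{$E,W$\}$ at a non‑mirror and into the two reflecting pairs at a mirror), and ``follow the pairing'' is a bijection of the directed edges of $\mathbb Z^2$. Hence every orbit is either a bi‑infinite path or a finite periodic loop, and an orbit that never leaves the finite box $Q(n)$ must in fact be a closed loop contained in $Q(n)$. So on the complement of $[0\leftrightarrow Q(n)^c]$ the orbit $\mathcal O$ through the origin (taken so as to pass through the vertex $0$, e.g.\ the orbit of the directed edge $(0,0)\to(1,0)$) is a closed loop inside $Q(n)$. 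Since successive vertices of an orbit differ by a unit vector, the columns met by $\mathcal O$ form an interval, and so do the rows; for a loop inside $Q(n)$ both intervals lie in $\{-n,\dots,n\}$, so $\mathcal O$ meets at most $2n+1$ columns and at most $2n+1$ rows. Consequently it suffices to prove that with probability at least $1/(2n+1)$ the orbit $\mathcal O$ meets $\ge 2n+2$ columns or $\ge 2n+2$ rows (either of which forces $\mathcal O$ out of $Q(n)$, because the relevant interval then has length $\ge 2n+1$ and contains $0$).

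For the quantitative estimate I would stratify according to the leftmost column $c_L\ge -n$ of the confined loop and run a mirror‑resampling (surgery) argument. At the topmost vertex of $\mathcal O$ in column $c_L$ the loop must turn, which pins down a mirror there of a prescribed orientation; switching that mirror to the other orientation turns $\mathcal O$ into an orbit reaching column $c_L-1$, hence either a confined loop whose leftmost column has dropped by (at least) one, or---when $c_L=-n$---an orbit that has escaped $Q(n)$. Since, conditionally on a vertex carrying a mirror, its two orientations are equally likely under $\mathbb P_p$, such a switch is free probabilistically, so the idea is to chain these switches: relate $\mathbb P_p(\text{confined, leftmost column}=k)$ to $\mathbb P_p(\text{confined, leftmost column}\le k-1)$, feed the $k=-n$ layer directly into escape, and combine with the symmetric contributions from the rightmost column and the extremal rows, so that iterating appropriately assembles into the factor $1/(2n+1)$.

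The delicate point---and the main obstacle---is to make this chaining close with \emph{exactly} $1/(2n+1)$ rather than a worse constant, because the obvious shortcuts fail. One cannot simply cover the probability space by $2n+1$ translated copies of $[0\leftrightarrow Q(n)^c]$: configurations in which the origin lies on a unit‑square loop form a positive‑probability event, uniformly in $n$, which lies outside $[0\leftrightarrow Q(n)^c]$ and outside every translate of it, so no exact covering exists; and for the same reason the first moment of the number of columns met by $\mathcal O$ is too small to drive the bound. Thus the surgery must be organized globally---keeping track of how much probability is recovered at each of the $2n+1$ layers while ensuring the modified configurations are accounted for exactly once (an injective bookkeeping)---and getting that accounting to yield the sharp constant is where the real work lies.
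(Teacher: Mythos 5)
Your preliminary reductions are sound: orbits are indeed reversible, so the orbit through the origin is either bi-infinite or a closed loop; a confined loop meets at most $2n+1$ columns and rows; and your local observation is correct that at the topmost vertex of the leftmost column the loop can use neither the west edge nor the north edge, which forces a mirror there of a prescribed orientation, and flipping it is measure-preserving and sends the orbit into the next column. But the proof stops exactly where the theorem begins. The entire quantitative content --- how the $2n+1$ flip maps are chained, how the four symmetric families (left/right/top/bottom) are combined without double counting, and why the constant comes out to be exactly $\frac{1}{2n+1}$ --- is not supplied, and you say yourself that ``getting that accounting to yield the sharp constant is where the real work lies.'' That is not a minor detail to be filled in: the flip map, applied to the event $\{$confined, leftmost column $=k\}$, lands in a union of events (confined with leftmost column anywhere in $\{-n,\dots,k-1\}$, or escaped), its inverse requires identifying \emph{which} vertex was flipped from the image configuration alone (the flipped vertex need not be the topmost vertex of any extremal column of the \emph{new} orbit), and there is no evident telescoping inequality. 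As written, the argument establishes no lower bound at all.

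For contrast, the actual proof avoids surgery entirely by changing the geometry rather than the configuration: one runs the model on the cylinder $\mathbb Z\times S_{2n+1}$ of odd circumference, observes that every finite loop crosses a fixed ring of $2n+1$ horizontal edges an even number of times, so by parity at least one of those $2n+1$ edges deterministically lies on an infinite path; rotation invariance then gives each edge probability at least $\frac{1}{2n+1}$ of lying on an infinite path, and the orbit cannot distinguish the cylinder from $\mathbb Z^2$ before travelling distance $n$. Your objection to first-moment arguments is correct on $\mathbb Z^2$ (small loops do have uniformly positive probability), but the cylinder's parity obstruction is precisely what restores a deterministic lower bound of $1$ on the relevant count, which is the ingredient your sketch is missing. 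To salvage your write-up you would either need to carry out the injective bookkeeping of the resampling argument in full, or replace it with an argument of this deterministic--topological type.
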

\begin{proof}
Examine the mirror model on an infinite cylinder $\mathbb Z \times S_{2n+1}$ of odd width $2n+1$. We first note
that, deterministically, there must exist an infinite path. Indeed,
examine the paths crossing the $2n+1$ horizontal edges whose left-end vertex has the coordinate $0$.
 Then each finite path must cross an even number of edges,
since each crossing moves it from the left half of the cylinder or back.
Since the total number of edges is odd, at least one cannot belong
to a finite path, hence it belongs to an infinite path.

Hence the expected number of edges that belong to an infinite path
is $\ge1$. Since the cylinder is invariant under rotations, we get that
the probability that it crosses any given vertex is equal to $\frac{1}{2n+1}$
of this expectation. So it is $\ge\frac{1}{2n+1}$.

Finally, since the path cannot tell the difference between being in
the cylinder and in $\mathbb{Z}^{2}$ before getting to distance $n$,
we get the desired claim on $\mathbb{Z}^{2}$.
\end{proof}
Remarks
\begin{enumerate}
\item The argument works for reasonable high-dimensional analogs of the
mirror model.
\item The argument does not apply to the periodic Manhattan model (see
  \cite[p.\ 13]{C}) because the Manhattan
model has the evenness built into it, and cannot be defined on a cylinder
with odd width consistently. Thus we have avoided a contradiction as
the result is not true for the the Manhattan model. 
It is not difficult to convince oneself that the path of the ray is
contained inside the vacant $*$-cluster of the origin. Therefore 
when the density of obstacles is bigger than the critical value
for site percolation on $\mathbb{Z}^2$, one has that
$\mathbb{P}_p(0\leftrightarrow Q(n)^c)$ decays exponentially.
\item On the other hand, the argument does apply to the \emph{randomly
  oriented} Manhattan model. In this model the orientations of the
  ``streets'' and ``avenues'' are random and i.i.d\@. Here there is no
  problem to define the model on a cylinder with odd width and the
  proof carries through literally.
\item Rotating mirrors. In this model mirrors are changing their position deterministically after each
interaction with the beam of light by flipping by $90^{\rm{o}}$ degrees. If $p=1$, it is easy to see that
the path of the ray of light is unbounded. However it is not known to the authors if in this case it is recurrent.
\end{enumerate}

We thank S. Smirnov for a useful discussion.

\bigskip

%{*}{*}{*}$\mathbb{P}((\{\varlimsup X=\infty\}\cap\{\varliminf X=-\infty\})\cup X\mbox{ is a loop})=1$,

%{*}{*}{*}$0-1-\infty$, 

%{*}{*}{*}rotating mirrors,

%{*}{*}{*}no fat and thin paths together,

% {*}{*}{*}indistinguishability.

\bigskip

\end{document}